\newcommand{\setword}[2]{%
  \phantomsection
  #1\def\@currentlabel{\unexpanded{#1}}\label{#2}%
}
\definecolor{uuuuuu}{rgb}{0.26666666666666666,0.26666666666666666,0.26666666666666666}
\definecolor{xdxdff}{rgb}{0.49019607843137253,0.49019607843137253,1.}
\definecolor{ffqqqq}{rgb}{1.,0.,0.}
\definecolor{ffqqqq}{rgb}{1.,0.,0.}
\definecolor{ffxfqq}{rgb}{1.,0.4980392156862745,0.}
\definecolor{uuuuuu}{rgb}{0.26666666666666666,0.26666666666666666,0.26666666666666666}
\definecolor{qqwuqq}{rgb}{0.,0.39215686274509803,0.}
\definecolor{zzttqq}{rgb}{0.6,0.2,0.}
\definecolor{xdxdff}{rgb}{0.49019607843137253,0.49019607843137253,1.}
\definecolor{qqqqff}{rgb}{0.,0.,1.}
\definecolor{cqcqcq}{rgb}{0.7529411764705882,0.7529411764705882,0.7529411764705882}
\definecolor{sqsqsq}{rgb}{0.12549019607843137,0.12549019607843137,0.12549019607843137}
\theoremstyle{plain}
\newtheorem{theorem}[subsection]{Theorem}
\newtheorem{lemma}[subsection]{Lemma}
\newtheorem{prop}[subsection]{Proposition}
\theoremstyle{definition}
\newtheorem{remark}[subsection]{Remark}
\newcommand{\uu}{\cup}
\newcommand{\ii}{\cap}
\newcommand{\ci}{\subseteq}
\newcommand{\sci}{\subset}
\newcommand{\es}{\emptyset}
\newcommand{\set}[1]{\{#1\}}
\newcommand{\ga}{\alpha}
\newcommand{\gk}{\kappa}
\newcommand{\gn}{\nu}
\newcommand{\go}{\omega}
\newcommand{\gs}{\sigma}
\newcommand{\tit}{\textit}
\newcommand{\D}[1]{\mathbb{#1}}
\newcommand{\te}{\text}
\newcommand{\ol}{\overline}
\newcommand{\ul}{\underline}
\begin{document}
	
	To appear, Monatshefte f\"ur Mathematik 
	\title{Quantization dimensions for inhomogeneous bi-Lipschitz Iterated Function Systems}

	\author{Amit Priyadarshi}
	\address{Department of Mathematics, Indian Institute of Technology Delhi, New Delhi, India 110016, Indian Institute of Technology Delhi-Abu Dhabi, Abu Dhabi, United Arab Emirates}
	\email{priyadarshi@maths.iitd.ac.in}
	
	\author{Mrinal K. Roychowdhury}
	\address{School of Mathematical and Statistical Sciences, University of Texas Rio Grande Valley, 1201
		West University Drive, Edinburg, TX 78539-2999, USA}
	\email{mrinal.roychowdhury@utrgv.edu}
	
	\author{Manuj Verma}
	\address{Department of Mathematics, Indian Institute of Technology Delhi, New Delhi, India 110016}
	\email{mathmanuj@gmail.com}
	
	\date{}
	\subjclass[2010]{Primary 37A50; Secondary 28A80, 94A34}

	\keywords{Quantization error, quantization dimension, inhomogeneous IFS, probability measure}

\date{}
\maketitle
	\pagestyle{myheadings}\markboth{A. Priyadarshi, M. K. Roychowdhury and M. Verma }{Quantization dimensions for inhomogeneous bi-Lipschitz Iterated Function Systems}
	
	\begin{abstract}
		Let $\nu$ be a Borel probability measure on a $d$-dimensional Euclidean space $\mathbb{R}^d$, $d\geq 1$, with a compact support, and let $(p_0, p_1, p_2, \ldots, p_N)$ be a probability vector with $p_j>0$ for $0\leq j\leq N$. Let $\{S_j: 1\leq j\leq N\}$ be a set of contractive mappings on $\mathbb{R}^d$. Then, a Borel probability measure $\mu$ on $\mathbb R^d$ such that $\mu=\sum_{j=1}^N p_j\mu\circ S_j^{-1}+p_0\gn$ is called an inhomogeneous measure, also known as a condensation measure on $\mathbb{R}^d$. For a given $r\in (0, +\infty)$, the quantization dimension of order $r$, if it exists, denoted by $D_r(\mu)$, of a Borel probability measure $\mu$ on $\mathbb{R}^d$ represents the speed at which the $n$th quantization error of order $r$ approaches to zero as the number of elements $n$ in an optimal set of $n$-means for $\mu$ tends to infinity. In this paper, we investigate the quantization dimension for such a condensation measure.
	\end{abstract}

	\section{Introduction}\label{se1}
	Quantization is a process of discretization, in other words, to represent a continuous or a large set of values by a set with smaller number of values. It has broad application in signal processing and data compression. 
Although the quantization is known to electrical engineers for several decades, Graf and Luschgy first introduced it in Mathematics (see \cite {GL1}). Quantization dimension of a Borel probability measure gives the speed how fast the quantization error approaches to zero as the number of elements in an optimal set of $n$-means approaches to infinity.  Quantization dimension is also connected with other dimensions of dynamical systems; for example, the lower quantization dimension of a Borel probability measure lies between the Hausdorff dimension and the lower box counting dimension of the measure, and the upper quantization dimension lies between the packing dimension and the upper box counting dimension of the measure (see  \cite{DAi,Potz}). Quantization dimension has exactly been determined for some fractal probability measures. In the known cases it has been seen that if the quantization dimension of a Borel probability measure exists, then it has connection with temperature function of the thermodynamic formalism that arises in multifractal analysis of the measure (for example, one can see  
\cite{GL3, LM, R6, R5, R3, R4, R2, R1}). For some relevant work interested readers can also see \cite{ALSW, AS, MR, PRV, PVV, RS, S1}. Quantization theory has recently been extended to constrained quantization. After the introduction of constrained quantization, and then conditional quantization, the quantization theory is now much more enriched with huge applications in our real world (see \cite{BCDRV, PR1, PR2}). To see many other earlier work interested readers can consult \cite{AW, B, BW, GG, GL2,   GL4, GN, GKL, G, GL, Z1, Z2}. 
	
	Given a Borel probability measure $P$ on $\D R^d$, a number $r \in (0, +\infty)$ and a natural number $n \in \D N$, the $n$th \tit{quantization
		error} of order $r$ for $P$, is defined by
	\begin{equation}\label{eq1}  V_{n, r}:=V_{n,r}(P)=\te{inf}\set{\int d(x, \ga)^r d
			P(x) : \ga \sci \D R^d, \, \te{card}(\ga) \leq n},\end{equation}
	where $d(x, \ga)$ denotes the distance from the point $x$ to the set
	$\ga$ with respect to a given norm $\|\cdot\|$ on $\D R^d$.
	The numbers
	\begin{equation} \label{eq2} \ul D_r(P):=\liminf_{n\to \infty}  \frac{r\log n}{-\log V_{n,r}(P)} \te{ and } \ol D_r(P):=\limsup_{n\to \infty} \frac{r\log n}{-\log V_{n, r}(P)} 
	\end{equation}
	are called the \tit{lower} and the \tit{upper quantization dimensions} of $P$ of order $r$, respectively. If $\ul D_r (P)=\ol D_r (P)$, the common value is called the \tit{quantization dimension} of $P$ of order $r$ and is denoted by $D_r(P)$. Quantization dimension measures the speed at which the specified measure of the error goes to zero as $n$ tends to infinity.
	For any $\gk>0$, the numbers $\liminf_n n^{\frac r \gk}  V_{n, r}(P)$ and $\limsup_n  n^{\frac r \gk}V_{n, r}(P)$ are called the \tit{$\gk$-dimensional lower} and \tit{upper quantization coefficients} for $P$, respectively. If the $\gk$-dimensional lower and upper quantization coefficients are finite and positive, then $\gk$ equals the quantization dimension $D_r(P)$.

	Let  $\mathcal{F}=\{\mathbb{R}^d; S_1,S_2,\ldots,S_N\}$ be a family of contractive iterated function system (IFS) such that for all $x, y\in \D R^d$, we have
	$$s_id(x,y)\leq d(S_i(x),S_i(y))\leq c_id(x,y),$$
	where $0<s_i\leq c_i<1$ and $i\in\{1,2,\ldots,N\}.$ 
	Let $K_{\phi}$ be the unique attractor of the IFS $\mathcal{F}$, that is,
	$$K_\es=\bigcup_{i=1}^{N}S_i(K_\es) .$$
	Let $C\subset\mathbb{R}^d$ be a compact set. Let $\nu$ be a Borel probability measure supported on $C$ such that $\nu(C)=1.$
	Let $(p_0,p_1,p_2,\ldots,p_N)$ be a probability vector. Then, there exists a unique Borel probability measure $\mu$ and a compact set $K$ such that
	$$\mu=p_0\nu+\sum_{i=1}^{N}p_i\mu\circ S_i^{-1},\quad K=\bigcup_{i=1}^{N}S_i(K)\bigcup C.$$
	We call $(\{S_i\}_{i=1}^{N}, (p_i)_{i=0}^{N},\nu)$ a \tit{condensation system}. The measure $\mu$ is called the \tit{invariant measure} or the \tit{condensation measure} for $({S_i}_{i=1}^{N}, (p_i)_{i=0}^{N},\nu)$, and the set $K$, which is the support of the measure $\mu,$  is called the \tit{attractor} of the system (see \cite{BD, L}). Such a measure is also termed as an \tit{inhomogeneous measure}.  We say that the condensation system satisfies the \tit{strong separation condition} (SSC) if $S_1(K), S_2(K), \ldots, S_N(K)~~\text{and}~~C$ are pairwise disjoint. An IFS
	$\mathcal{F}=\{\mathbb{R}^d; S_1,S_2,\ldots,S_N\}$  satisfies the \tit{open set condition} (OSC) if there exists a bounded nonempty open set $U \sci \D R^d$ such that $\uu_{j=1}^N S_j(U) \ci U$ and $S_i(U) \ii S_j(U) = \es$ for all $1\leq i \neq j\leq N$. Furthermore, $\mathcal{F}$ satisfies the \tit{strong open set condition} (SOSC) if $U$ can be chosen such that $U \ii K_{\es} \neq \es$, where $\es$ is the empty set and $K_\es$ is the attractor of $\mathcal{F}$. Notice that in the case of an iterated function system consisting of a finite number of self-similar mappings or conformal mappings on the Euclidean space, the open set condition implies the strong open set condition (see \cite{PRSS, S}), but it is not true for an iterated function system with infinitely many mappings (see \cite{SW}).
	
	In this paper, we state and prove the following theorem which is the main result of the paper.
	\begin{theorem}\label{main}
		Let $r\in (0,  +\infty)$, and let $\mu$ be the condensation measure associated with the condensation system
		$(\{S_i\}_{i=1}^{N}, (p_i)_{i=0}^{N},\nu)$. Assume that the condensation system $(\{S_i\}_{i=1}^{N},(p_i)_{i=0}^{N},\nu)$ satisfies the strong separation condition. Then,
		$$\max\{k_r, \underline{D}_r(\nu)\}\leq \underline{D}_r(\mu),$$
		where $k_r$ is uniquely determined by $\sum_{i=1}^{N}(p_is_{i}^{r})^{\frac{k_r}{r+k_r}}=1.$ Let $\mathcal{G}=\{\mathbb{R}^d; g_1,g_2,\ldots,g_M\}$ be another IFS satisfying the strong open set condition such that
		$d(g_i(x),g_i(y))\leq r_id(x,y),$ where $0<r_i<1$ and $i\in \{1,2,\ldots,M\}.$ Furthermore, if the measure $\nu$ is an invariant measure corresponding to the IFS $\mathcal{G}$ with probability vector $(t_1,t_2,\ldots,t_M)$, i.e.,  $\nu=\sum_{i=1}^{M}t_i\nu\circ g_{i}^{-1}$, then
		$$ \overline{D}_r(\mu)\leq \max\{l_r,d_r\},$$
		where $l_r$ and $d_r$ are given by $\sum_{i=1}^{N}(p_ic_{i}^{r})^{\frac{l_r}{r+l_r}}=1$ and $\sum_{i=1}^{M}(t_ir_{i}^{r})^{\frac{d_r}{r+d_r}}=1$, respectively.
	\end{theorem}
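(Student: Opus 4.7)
\medskip
\noindent\textbf{Proof plan.}
The strategy is to prove the two inequalities of the theorem separately, both relying on the iterated condensation identity
\[
\mu \;=\; p_0\sum_{j=0}^{k-1}\sum_{|\sigma|=j}p_\sigma\,\nu\circ S_\sigma^{-1} \;+\; \sum_{|\sigma|=k}p_\sigma\,\mu\circ S_\sigma^{-1}\qquad(k\geq 1),
\]
obtained by substituting the defining equation of $\mu$ into itself, combined with the SSC. I use the shorthand $p_\sigma=p_{\sigma_1}\cdots p_{\sigma_j}$, $s_\sigma=s_{\sigma_1}\cdots s_{\sigma_j}$, $c_\sigma=c_{\sigma_1}\cdots c_{\sigma_j}$, $S_\sigma=S_{\sigma_1}\circ\cdots\circ S_{\sigma_j}$ for a word $\sigma\in\{1,\ldots,N\}^j$, and observe that the SSC iterated to depth $k$ makes the supports of all summands pairwise disjoint, so in particular $\mu|_{S_\sigma(K)}=p_\sigma\,\mu\circ S_\sigma^{-1}$ for every finite word $\sigma$.

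\smallskip
\noindent\emph{Lower bound.}
The bound $\underline{D}_r(\nu)\leq \underline{D}_r(\mu)$ is a soft comparison: $\mu\geq p_0\nu$ gives $V_{n,r}(\mu)\geq p_0 V_{n,r}(\nu)$, and the additive $\log p_0$ is absorbed when passing to $\liminf$ in the definition of $\underline{D}_r$. For $k_r\leq \underline{D}_r(\mu)$, I would take an arbitrary $n$-point set $\alpha$ and combine the level-$k$ disjointness with a change of variables by $S_\sigma^{-1}$ to obtain
\[
V_{n,r}(\mu)\;\geq\;\sum_{|\sigma|=k}\int_{S_\sigma(K)} d(x,\alpha)^r\,d\mu(x)\;\geq\;\sum_{|\sigma|=k}p_\sigma s_\sigma^r\,V_{n_\sigma,r}(\mu),
\]
where the lower bi-Lipschitz factor $s_\sigma^r$ comes from the change of variables and $(n_\sigma)$ is an admissible partition of $n$ extracted from the Voronoi cells of $\alpha$. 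A Lagrange-multiplier / H\"older optimisation of the right-hand side, together with the defining equation $\sum_i (p_is_i^r)^{k_r/(r+k_r)}=1$, forces any self-consistent ansatz $V_{n,r}(\mu)\geq An^{-r/s}$ to satisfy $s\geq k_r$, and passing to $\liminf$ gives $\underline{D}_r(\mu)\geq k_r$.

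\smallskip
\noindent\emph{Upper bound.}
I would construct near-optimal $n$-quantizers by choosing a depth $k$ and allocating points over every cylinder in the iterated identity: $m_\sigma^{(\nu)}$ points for an optimal $\nu$-quantizer transported by $S_\sigma$ at each level $j<k$, and $m_\sigma^{(\mu)}$ points for an (inductively) near-optimal $\mu$-quantizer transported by $S_\sigma$ at level $k$, under the constraint $\sum m_\sigma^{(\nu)}+\sum m_\sigma^{(\mu)}\leq n$. The upper bi-Lipschitz bound produces
\[
V_{n,r}(\mu)\;\leq\;p_0\sum_{j=0}^{k-1}\sum_{|\sigma|=j}p_\sigma c_\sigma^r\,V_{m_\sigma^{(\nu)},r}(\nu)\;+\;\sum_{|\sigma|=k}p_\sigma c_\sigma^r\,V_{m_\sigma^{(\mu)},r}(\mu).
\]
Since $\mathcal{G}$ satisfies the SOSC, the Graf--Luschgy theorem for self-similar measures gives $\overline{D}_r(\nu)\leq d_r$, i.e.\ $V_{m,r}(\nu)\lesssim m^{-r/(d_r+\varepsilon)}$ for every $\varepsilon>0$. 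I would allocate points via the natural weights $m_\sigma^{(\nu)}\propto (p_\sigma c_\sigma^r)^{d_r/(r+d_r)}$ and $m_\sigma^{(\mu)}\propto (p_\sigma c_\sigma^r)^{l_r/(r+l_r)}$; H\"older's inequality, the equation $\sum_i(p_ic_i^r)^{l_r/(r+l_r)}=1$, and an induction on $n$ should then deliver $V_{n,r}(\mu)\lesssim n^{-r/(\max\{l_r,d_r\}+\varepsilon)}$, whence $\overline{D}_r(\mu)\leq \max\{l_r,d_r\}$.

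\smallskip
\noindent\emph{Main obstacle.}
The most delicate step is the upper bound: the two competing exponents $l_r$ and $d_r$ must coexist inside one point-allocation scheme, and one has to send $k\to\infty$ while keeping the self-referential term $V_{m_\sigma^{(\mu)},r}(\mu)$ subdominant. The parallel subtlety in the lower bound is turning the self-referential inequality $V_{n,r}(\mu)\geq \sum p_\sigma s_\sigma^r V_{n_\sigma,r}(\mu)$ into a quantitative bound $V_{n,r}(\mu)\gtrsim n^{-r/k_r}$; both issues should yield to an induction on $n$ combined with the correct H\"older-type estimate built from the exponents appearing in the defining equations of $k_r$, $l_r$ and $d_r$.
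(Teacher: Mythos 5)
Your lower-bound plan coincides with the paper's (Lemma~\ref{lem3.1} plus Proposition~\ref{Prop3.2}): level-one disjointness, the inequality $V_{n,r}(\mu)\geq\sum_{i}p_is_i^rV_{k_i(n),r}(\mu)$, and an induction on $n$ closed by H\"older's inequality against $\sum_i(p_is_i^r)^{k_r/(r+k_r)}=1$. The one point you gloss over is how ``an admissible partition of $n$'' is extracted: for each cylinder you need a subset $\alpha_\sigma\subseteq\alpha$ with $d(x,\alpha)=d(x,\alpha_\sigma)$ on $S_\sigma(K)$, so $\alpha_\sigma$ must contain \emph{every} point of $\alpha$ nearest to some $x\in S_\sigma(K)$, and the issue is that these subsets be pairwise disjoint so that $\sum_\sigma\mathrm{card}(\alpha_\sigma)\leq n$. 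Voronoi cells alone do not give this (one point can serve several cylinders); the paper gets it from the SSC gap $\delta=\min_{i\neq j}d(S_i(K),S_j(K))>0$ together with $\delta_n=\sup_{x\in K}d(x,A_n)\to0$, which requires working with optimal sets and $n$ large. This is a standard but necessary step.

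Your upper bound is a genuinely different route, and it is where the real gaps lie. The paper does not allocate optimal sub-quantizers at all: it builds an explicit $\phi_{n,r}$-point set with one point in each cell of a partition of $K$ indexed by two families of finite maximal antichains ($\Gamma_n^r\subset I^*$ and $\Gamma_n^r(w)\subset J^*$, calibrated by the threshold $\xi_r^n$), computes the cell measures exactly ($\mu(S_w(K))=p_w$ and $\mu(S_w(g_\sigma(C)))=p_wp_0t_\sigma$, using SSC and SOSC), bounds the cardinality of the partition by a geometric series, and then invokes Zhu's interpolation result to pass from the subsequence $\phi_{n,r}$ to all $n$. Against that, your sketch has three concrete problems. (a) You cannot quote the Graf--Luschgy theorem for self-similar measures to get $\overline{D}_r(\nu)\leq d_r$: the maps $g_i$ are only assumed Lipschitz from above, not similitudes, so this bound itself must be proved (via the SOSC, which yields $\nu(g_\sigma(C))=t_\sigma$, and an antichain argument on $J^*$ --- precisely what the paper folds into Lemma~\ref{lem3.4}). (b) Allocating with two different exponents $d_r/(r+d_r)$ and $l_r/(r+l_r)$ does not close under a single H\"older application; one should use a single exponent $t/(r+t)$ with $t>\max\{l_r,d_r\}$, for which both $\sum_i(p_ic_i^r)^{t/(r+t)}<1$ and $\sum_j(t_jr_j^r)^{t/(r+t)}<1$, so that $\sum_{m<k}\bigl(\sum_i(p_ic_i^r)^{t/(r+t)}\bigr)^m$ stays bounded as $k\to\infty$. (c) The self-referential tail $\sum_{|\sigma|=k}p_\sigma c_\sigma^rV_{m_\sigma^{(\mu)},r}(\mu)$ and the choice of $k=k(n)$ are left entirely open; the simplest repair is to bound $V_{m,r}(\mu)\leq(\mathrm{diam}\,K)^r$ and take $k\sim c\log n$ so that $(\sum_ip_ic_i^r)^k=o(n^{-r/t})$. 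With (a)--(c) supplied your scheme would work, but as written the upper bound is a plan rather than a proof.
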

	
	\begin{remark}
		In the above theorem, if we consider the mappings $S_i$ and $g_i$ as similarity mappings, i.e.,
		$$d(S_i(x),S_i(y))=s_id(x,y),\quad d(g_j(x),g_j(y))=r_jd(x,y)$$
		for all $i\in \{1,2,\ldots,N\}$ and $j\in \{1,2,\ldots,M\},$ then the quantization dimension of the condensation measure $\mu$ exists, and
		$$D_r(\mu)=\max\{k_r,D_r(\nu)\},$$
		where $k_r$ and $D_r(\nu)$ are given by  $\sum_{i=1}^{N}(p_is_{i}^{r})^{\frac{k_r}{r+k_r}}=1$ and $\sum_{i=1}^{M}(t_ir_{i}^{r})^{\frac{D_r(\nu)}{r+D_r(\nu)}}=1$, respectively.
	\end{remark}
	\begin{remark}
		In \cite{Z5}, Zhu estimated the quantization dimension of the condensation measure $\mu$ corresponding to the condensation system $(\{f_i\}_{i=1}^{N}, (p_i)_{i=0}^{N},\nu)$ with an inhomogeneous open set condition, where each $f_i$ is a similarity mapping and the measure $\nu$ is an invariant measure corresponding to some self-similar IFS satisfying the open set condition. For determining the upper and the lower bound, the author used the separation condition on the  condensation system as well as the separation condition on the self-similar IFS associated with the measure $\nu$. On the other hand, in Theorem \ref{main}, we note that for determining the lower bound, we only take separation condition on the condensation system  $(\{S_ i\}_{i=1}^{N}, (p_i)_{i=0}^{N},\nu)$ and the measure $\nu$ is any probability measure, and in obtaining the upper bound, we consider the condensation system with strong separation condition and the measure $\nu$ corresponding to some IFS with strong open set condition.
	\end{remark}
	\begin{remark}
		In \cite{Z3}, Zhu determined the quantization dimension of the condensation measure $\mu$ corresponding to the condensation system  	$(\{f_i\}_{i=1}^{N}, (p_i)_{i=0}^{N},\nu)$ with the strong separation condition, where $f_i$'s are similarity mappings and the measure $\nu$ is a self similar measure corresponding to the IFS $\{\mathbb{R}^d; f_1,f_2,\ldots,f_N\}$ with a probability vector $(t_1,t_2,\ldots,t_N).$  By taking the same conditions as in \cite{Z3}, Roychowdhury \cite{R} established a relationship between the upper bound of the quantization dimension and the temperature function of the thermodynamic formalism that arises in the multifractal analysis of $\mu.$  However, in Theorem \ref{main}, we consider more general condensation system  $(\{S_ i\}_{i=1}^{N}, (p_i)_{i=0}^{N},\nu)$  with the strong separation condition by taking the maps  $S_i$'s as bi-Lipschitz and the measure $\nu$ as the invariant  measure corresponding to some IFS.
		Thus, Theorem \ref{main} provides the quantization dimension of the condensation measure in a more general setting.
	\end{remark}
	
	\section{Preliminaries}\label{se2}
	
	Define $I:=\{1,2,\ldots,N\}$ and let $I^n=\{w= w_1w_2w_3\cdots w_n: w_i\in I~~ \forall ~~i\in \{1,2,\ldots,n\}\}.$ Let $I^*=\cup_{n\in \mathbb{N}\uu \set{0}}I^n$, where $I^0$ denotes the set consisting of the empty sequence $\es$.
	Thus, $I^\ast$ denotes the set of all finite sequences of symbols belonging to $I$ including the empty sequence $\emptyset$. Notice that if $\go \in I^\ast$, then $\go \in I^n$ for some $n\in \D N\uu \set{0}$. Such an $n$ is called the length of the word $\go$ and is denoted by $|\go|$. Notice that the length of the empty sequence is zero. Let $\Omega$ denote the set of all infinite sequences of symbols from the set $I.$  For $w=w_1w_2\cdots w_n\in I^n$ with $n\geq 1$, we define
	$$S_w=S_{w_1}\circ S_{w_2} \circ \cdots \circ S_{w_n},~~ p_w=\prod_{i=1}^{n}p_{w_i},~~ s_w=\prod_{i=1}^{n}s_{w_i},~~ c_w=\prod_{i=1}^{n}c_{w_i}~\text{and}~w^-=w_1w_2\cdots w_{n-1}.$$
	For the empty sequence $\es$, we write
	\[S_\es=Id_{\D R^d}, \, p_\es=1, \, \te{ and } s_\es=c_\es=1,\]
	where $Id_{\D R^d}$ is the identity mapping on $\D R^d$.
	For $n\geq 1$, by iterating, we obtain
	$$K=\bigg(\bigcup_{w\in I^n}S_w(K)\bigg)\cup \bigg(\bigcup_{m=0}^{n-1}\bigcup_{w\in I^m}S_w(C)\bigg),~\text{and}~ \mu=\sum_{w\in I^n}p_w\mu\circ S_{w}^{-1}+p_0\sum_{m=0}^{n-1}\sum_{w\in I^m}p_w \nu \circ S_{w}^{-1}.$$
	For any $w\in I^*$ such that $|w|\geq n$, we denote $w|_{n}=w_1w_2\cdots w_n.$ We define an ordering on $I^*$ as follows
	$$w\prec \sigma~~\text{if and only if}~~|w|\leq |\sigma|,~~ \sigma|_{|w|}=w,$$
	where $\sigma,w\in I^*.$  For any $\sigma, w\in I^*$, we say that $\sigma$ and $w$ are incomparable if neither $\sigma\prec w$ nor $w\prec \sigma$. On the other hand, if $\go\prec \gs$, we say that $\gs$ is an extension of $\go$. For $w\in I^*$, we define
	$$\Xi_{w}(m)=\{\sigma\in I^{|w|+m}: w\prec \sigma\}\quad \text{and}\quad \Xi_{w}^{*}=\cup_{m\in \mathbb{N}}\Xi_{w}(m),$$
	where $m\geq 1.$\\
	Let $\mathcal{G}=\{\mathbb{R}^d; g_1,g_2,\ldots,g_M\}$ be another IFS such that
	$$d(g_i(x),g_i(y))\leq r_id(x,y),$$ where $0<r_i<1$ and $i\in \{1,2,\ldots,M\}.$
	Define $J:=\{1,2,\ldots,M\}.$ We define $J^*$ and $J^n$ in a similar way as we define $I^*$ and $I^n.$
	\par
	We call $\Gamma\subset I^*$ a finite maximal antichain if $\Gamma$ is a finite set such that every sequence in $\Omega$ is an extension of some element in $\Gamma$, but no element of $\Gamma$ is an extension of another element in $\Gamma.$
	Let $\Gamma$ be a finite maximal antichain. Set
	$$l_\Gamma=\min\{|\omega|: \omega\in \Gamma\} \quad \text{and} \quad M_\Gamma=\max\{|\omega|: \omega\in \Gamma\}.$$
	For $w\in I^{l_\Gamma}$, we define
	$$\Delta_{\Gamma}(w)=\{\sigma\in I^*: w\prec \sigma,~~\Xi_{\sigma}^{*}\cap \Gamma\ne \emptyset \},\quad \Delta_{\Gamma}^{*}=\cup_{w\in I^{l_\Gamma}}\Delta_{\Gamma}(w).$$
	\begin{lemma} (see \cite{Z5}) \label{Z}
		Let $\Gamma$ be a finite maximal antichain. Then,
		$$K=\bigg(\bigcup_{w\in \Gamma}S_{w}(K)\bigg)\cup \bigg(\bigcup_{w\in \Delta_{\Gamma}^{*}} S_{w}(C)\bigg)\cup \bigg(\bigcup_{n=0}^{l_\Gamma-1}\bigcup_{w\in I^n}S_{w}(C)\bigg).$$
	\end{lemma}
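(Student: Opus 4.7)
The plan is to derive the decomposition from the already-displayed iteration
\[K = \bigcup_{w\in I^n} S_w(K) \cup \bigcup_{m=0}^{n-1}\bigcup_{w\in I^m} S_w(C)\]
applied at $n = M_\Gamma$. First I would replace $\bigcup_{w\in I^{M_\Gamma}} S_w(K)$ by $\bigcup_{\sigma\in\Gamma} S_\sigma(K)$. By maximality of the antichain, every word of length $M_\Gamma$ has a unique prefix $\sigma\in\Gamma$, so $S_w(K) \subseteq S_\sigma(K)$; and conversely iterating the basic recursion on each $S_\sigma(K)$ expresses it as a union of $S_{\sigma v}(K)$ of length $M_\Gamma$ together with additional $S_{\sigma v}(C)$ terms whose word lengths lie in $[|\sigma|, M_\Gamma)$. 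These additional terms are already present in $\bigcup_{m=0}^{M_\Gamma-1}\bigcup_{w\in I^m} S_w(C)$, so the substitution gives
\[K = \bigcup_{\sigma\in\Gamma} S_\sigma(K) \cup \bigcup_{m=0}^{M_\Gamma-1}\bigcup_{w\in I^m} S_w(C).\]

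Next I would split the residual $C$-terms by length. Those with $m < l_\Gamma$ appear verbatim as the third piece of the claimed decomposition. For a word $w$ with $l_\Gamma \leq |w| < M_\Gamma$, I would use the maximality and antichain property to argue the dichotomy: either (i) some prefix of $w$ (possibly $w$ itself) lies in $\Gamma$, in which case writing $w = \sigma v$ gives $S_w(C) = S_\sigma(S_v(C)) \subseteq S_\sigma(K)$ and the term is absorbed into $\bigcup_{\sigma\in\Gamma} S_\sigma(K)$; or (ii) $w$ is a strict prefix of some element of $\Gamma$, equivalently $\Xi_w^{\ast}\cap \Gamma \neq \es$, which places $w$ in $\Delta_\Gamma(w|_{l_\Gamma}) \subseteq \Delta_\Gamma^{\ast}$. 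Exclusivity of the two cases follows from the antichain condition, and exhaustiveness from maximality applied to any infinite extension of $w$.

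Finally I would verify both inclusions. The forward inclusion follows by substituting this dichotomy into the previous display; the reverse inclusion is immediate, since $S_\sigma(K) \subseteq K$ for each $\sigma \in \Gamma$ and $S_w(C) \subseteq S_w(K) \subseteq K$ for every $w \in I^{\ast}$. I expect the main obstacle to be the careful bookkeeping around the definitions of $\Xi_w^{\ast}$ and $\Delta_\Gamma$, together with the verification that cases (i) and (ii) really cover every word of length in $[l_\Gamma, M_\Gamma)$; once this dichotomy is correctly set up, the remainder is purely formal set-theoretic manipulation.
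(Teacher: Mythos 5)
Your argument is correct and complete. Note that the paper itself offers no proof of this lemma---it is stated with only a citation to Zhu's work \cite{Z5}---so there is nothing in the text to compare your route against. The argument you give is the natural one: starting from the $n$-fold iteration of $K=\bigcup_i S_i(K)\cup C$ at level $M_\Gamma$, the key point is precisely the dichotomy you identify, namely that by maximality every word $w$ with $l_\Gamma\le |w|<M_\Gamma$ either has a (unique, by the antichain property) prefix in $\Gamma$, in which case $S_w(C)$ is absorbed into $\bigcup_{\sigma\in\Gamma}S_\sigma(K)$, or admits a strict extension in $\Gamma$, in which case $\Xi_w^{*}\cap\Gamma\ne\emptyset$ and $w\in\Delta_\Gamma(w|_{l_\Gamma})\subseteq\Delta_\Gamma^{*}$; together with the trivial reverse inclusion (every piece of the right-hand side lies in $K$), this yields the stated identity. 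Your bookkeeping on word lengths (the extra $C$-terms produced when iterating inside each $S_\sigma(K)$ have lengths in $[|\sigma|,M_\Gamma)$ and are already present, and $\Delta_\Gamma^{*}$ consists exactly of strict prefixes of elements of $\Gamma$ with length at least $l_\Gamma$) is accurate, so the proof stands as written.
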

	For a given Borel probability measure $P$ on $\D R^d$, a positive real number $r$ and a natural number $n$, let $V_{n, r}$ denote the $n$th quantization error of order $r$ as defined in equation \eqref{eq1}. If the infimum in \eqref{eq1} is attained for some $A  \sci \D R^d$, then $A$ is called an $n$-\emph{optimal set} for the measure $P$ of order $r$. The existence of $n$-optimal set is guaranteed if $P$ has a compact support (see \cite{GL1}). The lower quantization dimension $\underline{D}_r$ and the upper quantization dimension $\overline{D}_r$ are as defined in equation \eqref{eq2}. We recall the following important proposition which will be needed to prove our main results.
	
	\begin{prop}(see \cite{GL1})\label{Prop2.2}
		\begin{enumerate}
			\item If $0\leq t_1<\overline{D}_r<t_2,$ then
			$$\limsup_{n\to \infty}n V_{n,r}^{\frac{t_1}{r}}=\infty~~~\text{and}~~~\lim_{n\to \infty}n V_{n,r}^{\frac{t_2}{r}}=0.$$
			\item If $0\leq t_1<\underline{D}_r<t_2,$ then
			$$\liminf_{n\to \infty}n V_{n,r}^{\frac{t_2}{r}}=0~~~\text{and}~~~\lim_{n\to \infty}nV_{n,r}^{\frac{t_1}{r}}=\infty.$$
		\end{enumerate}
		
	\end{prop}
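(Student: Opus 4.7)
The plan is to reduce the four assertions to the single elementary identity
$$\log\bigl(n V_{n,r}^{t/r}\bigr) \;=\; \log n + \tfrac{t}{r}\log V_{n,r} \;=\; \tfrac{1}{r}\bigl(-\log V_{n,r}\bigr)\!\left(\frac{r\log n}{-\log V_{n,r}} - t\right),$$
valid whenever $V_{n,r} \in (0,1)$. Since $P$ has compact support and $r<\infty$, a short argument using an $\varepsilon$-net in $\operatorname{supp}(P)$ shows $V_{n,r}\to 0$, hence $-\log V_{n,r}\to +\infty$. Thus the positive factor $(-\log V_{n,r})/r$ in the identity blows up, and the sign and magnitude of $\log(nV_{n,r}^{t/r})$ are controlled entirely by the bracketed difference $\Delta_n(t) := \frac{r\log n}{-\log V_{n,r}} - t$.

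First I would handle part (1). For the second conclusion, $\overline{D}_r < t_2$ means $\limsup_n \frac{r\log n}{-\log V_{n,r}} < t_2$, so there exists $\eta>0$ with $\Delta_n(t_2) \le -\eta$ for all sufficiently large $n$. Multiplying by the divergent positive factor yields $\log(nV_{n,r}^{t_2/r}) \to -\infty$, i.e., $\lim_n nV_{n,r}^{t_2/r} = 0$. For the first conclusion, $\overline{D}_r > t_1$ produces a subsequence $(n_k)$ along which $\Delta_{n_k}(t_1) \ge \eta > 0$; along this subsequence $\log(n_k V_{n_k,r}^{t_1/r}) \to +\infty$, so the limsup is $+\infty$.

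Part (2) is symmetric. The hypothesis $\underline{D}_r > t_1$ gives $\Delta_n(t_1) \ge \eta$ eventually (not merely subsequentially), which by the same identity forces $\lim_n nV_{n,r}^{t_1/r} = +\infty$. The hypothesis $\underline{D}_r < t_2$ gives a subsequence along which $\Delta_{n_k}(t_2) \le -\eta$, whence $\liminf_n nV_{n,r}^{t_2/r} = 0$. In all four cases the argument is the same two-line manipulation; the only thing that differs is whether an eventual bound or a subsequential bound on $\Delta_n(t)$ is available, and this is precisely what the $\liminf$ versus $\limsup$ definitions of $\underline{D}_r$ and $\overline{D}_r$ supply.

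I do not anticipate a serious obstacle. The one subtle point is the degenerate case $V_{n,r}=0$ from some $n$ onward, which occurs precisely when $\operatorname{supp}(P)$ is finite; in that case $\overline{D}_r = \underline{D}_r = 0$, so the hypotheses $t_1 < \overline{D}_r$ or $t_1 < \underline{D}_r$ are vacuous and there is nothing to prove. Apart from this easy observation, the argument is nothing more than a direct algebraic rearrangement of the definitions of $\overline{D}_r$ and $\underline{D}_r$ together with the fact that $-\log V_{n,r}$ diverges to $+\infty$.
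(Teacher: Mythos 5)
Your proof is correct, and it is essentially the standard argument: the paper gives no proof of this proposition, simply citing Graf--Luschgy \cite{GL1}, where the result is established by exactly this logarithmic rearrangement of $nV_{n,r}^{t/r}$ combined with $V_{n,r}\to 0$. Your handling of the degenerate case of finite support and of the eventual-versus-subsequential dichotomy for $\liminf$/$\limsup$ is accurate, so there is nothing to add.
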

	\section{Main results}\label{se3}
	
	To prove Theorem~\ref{main}, which gives the main results of the paper, we state and prove some lemmas and propositions.  
	
	\begin{lemma} \label{lem3.1}
		Let $r\in (0,  +\infty)$, and let $\mu$ be the condensation measure associated with the condensation system
		$(\{S_i\}_{i=1}^{N}, (p_i)_{i=0}^{N},\nu)$. Then,
		$$\underline{D}_r(\nu)\leq \underline{D}_r(\mu)\quad \text{and}\quad \overline{D}_r(\nu)\leq \overline{D}_r(\mu).$$
	\end{lemma}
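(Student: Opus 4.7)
The plan is to exploit the defining relation $\mu = p_0 \nu + \sum_{i=1}^N p_i \mu \circ S_i^{-1}$, which, since every term is a non-negative measure, yields the pointwise bound $\mu \geq p_0 \nu$ on Borel sets. This immediately gives, for any finite set $\alpha \sci \mathbb{R}^d$ with $\te{card}(\ga)\leq n$ and any $r\in(0,+\infty)$,
\[
\int d(x,\ga)^r\,d\mu(x)\;\geq\; p_0\int d(x,\ga)^r\,d\nu(x)\;\geq\; p_0\,V_{n,r}(\nu).
\]
Taking the infimum over $\ga$ with $\te{card}(\ga)\leq n$ on the left gives the key comparison
\[
V_{n,r}(\mu)\;\geq\; p_0\,V_{n,r}(\nu),
\]
so that $V_{n,r}(\nu)\leq p_0^{-1}V_{n,r}(\mu)$.

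Next I would pass to the defining formulas \eqref{eq2} for the quantization dimensions. Taking logarithms in the previous inequality gives
\[
-\log V_{n,r}(\nu)\;\geq\; -\log V_{n,r}(\mu)+\log p_0,
\]
so for all $n$ large enough that $-\log V_{n,r}(\mu)+\log p_0>0$ (which is automatic since $V_{n,r}(\mu)\to 0$ as $n\to\infty$),
\[
\frac{r\log n}{-\log V_{n,r}(\nu)}\;\leq\;\frac{r\log n}{-\log V_{n,r}(\mu)+\log p_0}.
\]

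The only thing left is to verify that the additive constant $\log p_0$ in the denominator washes out in the limit. Since $-\log V_{n,r}(\mu)\to\infty$, we have
\[
\frac{-\log V_{n,r}(\mu)+\log p_0}{-\log V_{n,r}(\mu)}\;\longrightarrow\;1,
\]
so that $\liminf$ (respectively $\limsup$) of the right-hand side of the displayed inequality equals $\underline{D}_r(\mu)$ (respectively $\overline{D}_r(\mu)$). Applying $\liminf_{n\to\infty}$ and $\limsup_{n\to\infty}$ to both sides of the inequality then gives
\[
\underline{D}_r(\nu)\leq \underline{D}_r(\mu)\qquad\text{and}\qquad \overline{D}_r(\nu)\leq \overline{D}_r(\mu),
\]
as required. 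There is really no obstacle here; the only subtle point is keeping track of signs (note $\log p_0<0$) when converting the inequality $V_{n,r}(\mu)\geq p_0 V_{n,r}(\nu)$ into an inequality between the dimension ratios, and observing that the additive constant is absorbed in the limit because the denominators tend to infinity.
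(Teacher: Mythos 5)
Your proposal is correct and follows essentially the same route as the paper: both arguments rest on the monotonicity bound $\mu\geq p_0\nu$, the resulting inequality $V_{n,r}(\mu)\geq p_0V_{n,r}(\nu)$, and the observation that the additive constant $\log p_0$ is absorbed in the limit because $V_{n,r}(\mu)\to 0$. The only difference is cosmetic: the paper carries the correction as a multiplicative factor $\bigl(1-\frac{\log p_0}{\log V_{n,r}(\mu)}\bigr)$ on the left-hand side, while you keep it as an additive term in the denominator on the right.
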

	\begin{proof}
		Let $A$ be an $n$-optimal set for the Borel probability measure $\mu$ of order $r$. Then, we have
		$$V_{n,r}(\mu)=\int d(x,A)^rd\mu(x).$$
		Since $\mu=p_0\nu+\sum_{i=1}^{N}p_i\mu\circ S_i^{-1}$, 	we obtain $\mu\geq p_0 \nu.$ Then,
		\begin{align*}
			V_{n,r}(\mu)\geq p_0\int d(x,A)^rd\nu(x)\geq p_0 V_{n,r}(\nu).
		\end{align*}
		In the above, taking logarithm, and then dividing both side by $\log V_{n,r}(\mu) \log V_{n,r}(\nu)$, we get
		$$\frac{1}{\log V_{n,r}(\mu)}\leq \frac{-\log p_0}{\log V_{n,r}(\mu) \log V_{n,r}(\nu)}+\frac{1}{\log V_{n,r}(\nu)},$$
		which implies that
		$$\bigg(1-\frac{\log p_0}{\log V_{n,r}(\mu)}\bigg)\frac{r \log n}{-\log V_{n,r}(\nu)}\leq \frac{r \log n}{-\log V_{n,r}(\mu)}.$$
		By \cite[Lemma 6.1]{GL1}, we have $\lim_{n\to \infty}V_{n,r}(\mu)\to 0.$ Hence, by taking liminf and limsup on both sides of the above inequality, we get our required result.
	\end{proof}
	In the following proposition, we only assume that the condensation system $(\{S_i\}_{i=1}^{N},(p_i)_{i=0}^{N},\nu)$ satisfies the strong separation condition without assuming any condition on the probability measure $\nu.$
	\begin{prop}\label{Prop3.2}
		Assume that the condensation system $(\{S_i\}_{i=1}^{N},(p_i)_{i=0}^{N},\nu)$ satisfies the strong separation condition. Let $\mu$ be the inhomogeneous measure associated with   $(\{S_i\}_{i=1}^{N},(p_i)_{i=0}^{N},\nu)$. Then for any $r>0$, we have 
		$$\liminf_{n\to \infty} n V_{n,r}^{\frac{k_r}{r}}(\mu)>0,$$
		where $k_r$ is uniquely determined by $\sum_{i=1}^{N}(p_is_{i}^{r})^{\frac{k_r}{r+k_r}}=1.$
	\end{prop}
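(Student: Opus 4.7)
My plan is to establish the uniform pointwise lower bound $V_{n,r}(\mu) \geq C\, n^{-r/k_r}$ for some $C>0$ and all sufficiently large $n$, which instantly gives $\liminf_n n\, V_{n,r}(\mu)^{k_r/r}\geq C^{k_r/r}>0$. The strategy is to associate to each $n$ a finite maximal antichain $\Gamma(u_n)\ci I^*$ of cardinality at least $2n$ and use SSC to argue that any $n$-set $\ga\ci\D R^d$ can be ``near'' at most $n$ of the cylinders $\{S_w(K):w\in\Gamma(u_n)\}$, leaving at least $n$ of them to contribute quantitatively to the quantization error. Setting $t:=k_r/(r+k_r)\in(0,1)$ and $\gh_w:=(p_ws_w^r)^t$, the defining equation for $k_r$ reads $\sum_{i=1}^N \gh_i=1$, hence by multiplicativity $\sum_{w\in\Gamma}\gh_w=1$ for every finite maximal antichain $\Gamma$. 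I would take the threshold antichain
$$\Gamma(u):=\set{w\in I^*:\gh_{w^-}>u\geq\gh_w};$$
since $\gh_w\leq u$ on $\Gamma(u)$, the identity $\sum\gh_w=1$ forces $|\Gamma(u)|\geq 1/u$, while the one-step inequality $\gh_w=\gh_{w^-}\gh_{w_{|w|}}>u\gh_{\min}$ (with $\gh_{\min}:=\min_i\gh_i$) gives the uniform lower bound $p_ws_w^r=\gh_w^{1/t}>(u\gh_{\min})^{1/t}$ on the mass-times-contraction weights along $\Gamma(u)$.

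The geometric core is a tube-disjointness statement. Let $\gd>0$ be the minimum pairwise distance among $S_1(K),\ldots,S_N(K),C$. For any two distinct $w,w'\in\Gamma(u)$ with longest common prefix $v$, the lower Lipschitz bound $d(S_v(x),S_v(y))\geq s_v\, d(x,y)$ applied to the top-level SSC yields $d(S_w(K),S_{w'}(K))\geq s_v\gd$. Since $s_w,s_{w'}\leq s_v$, choosing $\epsilon_0:=\gd/3$ makes the tubes $U_w:=\set{x\in\D R^d:d(x,S_w(K))<\epsilon_0 s_w}$ pairwise disjoint over $w\in\Gamma(u)$. Hence any $n$-set $\ga$ can meet at most $n$ of these tubes, and on each of the remaining ``unoccupied'' cylinders we have $d(x,\ga)\geq\epsilon_0 s_w$ for every $x\in S_w(K)$.

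Taking $u=u_n:=1/(2n)$ then gives $|\Gamma(u_n)|\geq 2n$ and at least $n$ unoccupied cylinders. Iterating the invariance equation of $\mu$ shows $\mu(S_w(K))\geq p_w$, so for any $n$-set $\ga$,
$$\int d(x,\ga)^r\,d\mu(x)\;\geq\;\sum_{w\text{ unocc.}}\epsilon_0^{\,r}\,s_w^r\,p_w\;\geq\;\epsilon_0^{\,r}\,n\,(u_n\gh_{\min})^{1/t}\;=\;C\,n^{1-1/t},$$
with $C>0$ independent of $n$ and $\ga$. Since $1-1/t=-r/k_r$, taking the infimum over $\ga$ proves $V_{n,r}(\mu)\geq C n^{-r/k_r}$, as required. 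The step I expect to be the main obstacle is the tube-disjointness claim: one must verify cleanly that the SSC separation constant $\gd$ really scales to $s_v\gd$ under $S_v$, and that $s_v$ dominates both $s_w$ and $s_{w'}$ uniformly in $w,w'\in\Gamma(u)$, so that a single $\epsilon_0$ independent of $w$ works. Everything after that --- the antichain count, the pointwise weight bound, and the arithmetic identity $1-1/t=-r/k_r$ --- is routine bookkeeping.
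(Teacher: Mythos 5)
Your argument is correct, but it takes a genuinely different route from the paper. The paper's proof works with an $n$-optimal set $A_n$: using the SSC gap $\delta$ and the fact that $\delta_n=\sup_{x\in K}d(x,A_n)\to 0$, it splits $A_n$ into disjoint pieces $A_{n_i}$ attached to the first-level cylinders $S_i(K)$, derives the recursive inequality $V_{n,r}(\mu)\ge\sum_{i}p_is_i^{r}V_{k_i(n),r}(\mu)$ with $\sum_i k_i(n)\le n$, and closes the estimate by induction on $n$ combined with the generalized H\"older inequality, which is exactly where the defining equation $\sum_i(p_is_i^r)^{k_r/(r+k_r)}=1$ enters. You instead prove the pointwise bound $V_{n,r}(\mu)\ge Cn^{-r/k_r}$ directly by a packing argument: a threshold antichain $\Gamma(u_n)$ of cardinality at least $2n$, pairwise disjoint tubes of radius $\epsilon_0 s_w$ around the cylinders $S_w(K)$, and the mass bound $\mu(S_w(K))\ge p_w$ obtained by iterating the invariance equation. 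The steps you flag as delicate do go through: for distinct $w,w'\in\Gamma(u)$ with longest common (necessarily proper) prefix $v$, the lower Lipschitz bound gives $d(S_w(K),S_{w'}(K))\ge s_v\delta$ while $s_w,s_{w'}\le s_v$, so $\epsilon_0=\delta/3$ works uniformly; and $\Gamma(u)$ is indeed a finite maximal antichain for $0<u<1$, so the identity $\sum_{w\in\Gamma(u)}(p_ws_w^r)^{k_r/(r+k_r)}=1$ yields $|\Gamma(u)|\ge 1/u$. Your approach buys an explicit constant, needs no appeal to the existence of optimal sets or to $\delta_n\to0$, and uses the equation for $k_r$ through the antichain identity rather than through H\"older; the paper's induction is the standard Graf--Luschgy template and is shorter once the one-step recursive inequality is established. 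Both arguments use the SSC only through the separation of the images $S_i(K)$, so both extend verbatim to the generality claimed in the proposition.
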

	\begin{proof}
		Let $r\in (0,  +\infty)$. We define a mapping $\phi: [0,+\infty)\to [0,+\infty)$ by
		$$\phi(t)=\sum_{i=1}^{N}(p_is_{i}^{r})^{t}.$$
		The map $\phi$ is strictly decreasing and continuous. Moreover, $\phi(0)=N>1$ and $\phi(1)=\sum_{i=1}^{N}(p_is_{i}^{r})<\sum_{i=1}^{N}p_i<1.$ Thus, there exists a unique $\Tilde{t_r}\in (0,1)$ such that $\sum_{i=1}^{N}(p_i s_{i}^{r})^{\Tilde{t_r}}=1.$  By the uniqueness of $\Tilde{t_r}$, there is a unique $k_r\in (0,  +\infty)$ such that $$\sum_{i=1}^{N}(p_is_{i}^{r})^{\frac{k_r}{r+k_r}}=1.$$ 
		Since the condensation system $(\{S_i\}_{i=1}^{N},(p_i)_{i=0}^{N},\nu)$ satisfies the strong separation condition, $S_1(K),S_2(K),\ldots,S_N(K)~\text{and}~C$ are pairwise disjoint sets. Write
		$$\delta=\min\{d(S_i(K),S_j(K)): 1\leq i\ne j\leq N\}>0.$$
		Let $A_n$  be an $n$-optimal set for the Borel probability measure $\mu$ of order $r$. Set $\delta_n=\sup_{x\in K}d(x,A_n)$. Then, $\lim_{n\to \infty}\delta_n=0$ (see \cite[Lemma 6.1, Lemma 13.8]{GL1}). This implies that there exists $n_0\in \mathbb{N}$ such that $\delta_n\leq \frac{\delta}{3}$ for all $n\geq n_0.$ Set $A_{n_i}=\{a\in A_n: d(a,S_i(K))\leq \delta_n\}.$ It is easy to see that $A_{n_i}\ne \emptyset$ and $A_{n_i}\cap A_{n_j}=\emptyset$ for $1\leq i\ne j\leq N.$ Let $k_i(n)=\text{card}(A_{n_i}).$ Thus, we have $\sum_{i=1}^{N}k_i(n)\leq n.$ Also, $k_i(n)\leq n-1$ for each $1\leq i\leq N.$
		Since $A_n$ is an $n$-optimal set for the Borel probability measure $\mu$ of order $r$, we have
		\begin{align*}
			V_{n,r}(\mu)&=\int d(x,A_n)^r d\mu(x)\\&\geq \sum_{i=1}^{N} \int_{S_i(K)} d(x,A_n)^r d\mu(x)\\&=\sum_{i=1}^{N} \int_{S_i(K)} d(x,A_{n_i})^r d\mu(x)\\&\geq  \sum_{i=1}^{N}p_i \int_{S_i(K)} d(x,A_{n_i})^r d(\mu\circ S_{i}^{-1})(x)\\& \geq  \sum_{i=1}^{N}p_i s_{i}^{r}\int_{K} d(x,S_{i}^{-1}(A_{n_i}))^r d(\mu)(x)\\&\geq \sum_{i=1}^{N}p_i s_{i}^{r} V_{k_{i}(n),r}(\mu).
		\end{align*}
		Define $M_0= \min\{n^{\frac{r}{k_r}}V_{n,r}(\mu): n<n_0\}.$ Clearly, $M_0>0$ and $M_0\leq n^{\frac{r}{k_r}}V_{n,r}(\mu)$ for all $n<n_0$. Now our claim is that $M_0\leq n^{\frac{r}{k_r}}V_{n,r}(\mu)$  for all $n\in \mathbb{N}.$ We prove our claim by induction on $n\in \mathbb{N}.$ Let us assume that $M_0\leq {\eta}^{\frac{r}{k_r}}V_{\eta,r}(\mu)$
		for all $\eta<n$ and $n\geq n_0.$ Thus, we have
		$$V_{n,r}(\mu)\geq  \sum_{i=1}^{N}p_i s_{i}^{r} V_{k_{i}(n),r}(\mu)\geq M_0 \sum_{i=1}^{N}p_i s_{i}^{r} {k_i(n)}^{\frac{-r}{k_r}}.$$
		Hence, by the generalized H\"older's inequality, we get
		$$\sum_{i=1}^{N}p_i s_{i}^{r} {k_i(n)}^{\frac{-r}{k_r}}\geq \bigg(\sum_{i=1}^{N}(p_i s_{i}^{r})^{\frac{k_r}{r+k_r}}\bigg)^{1+\frac{r}{k_r}}\cdot \bigg(\sum_{i=1}^{N}k_i(n)\bigg)^{\frac{-r}{k_r}}.$$
		Using the facts that $\sum_{i=1}^{N}(p_i s_{i}^{r})^{\frac{k_r}{r+k_r}}=1$ and $\sum_{i=1}^{N}k_i(n)\leq n$, we get $V_{n,r}(\mu)\geq M_0 n^{\frac{-r}{k_r}}$, that is, $M_0\leq n^{\frac{r}{k_r}}V_{n,r}(\mu).$ Thus, by induction, we obtain that $M_0\leq n^{\frac{r}{k_r}}V_{n,r}(\mu)$ holds for all $n\in \mathbb{N}.$ This implies that
		$$\liminf_{n\to \infty} n V_{n,r}^{\frac{k_r}{r}}(\mu)\geq M_{0}^{\frac{k_r}{r}}>0.$$
		Thus, the proof is done.
	\end{proof}
	For determining the upper bound of the upper quantization dimension, we assume that the measure $\nu$ is an invariant measure corresponding to the IFS $\mathcal{G}=\{\mathbb{R}^d; g_1,g_2,\ldots,g_M\}$ and probability vector $(t_1,t_2,\ldots,t_M)$. Thus, we have 	$$\nu=\sum_{i=1}^{M}t_i\nu\circ g_{i}^{-1},\quad  C=\bigcup_{i=1}^{M}g_i(C), \quad \nu(C)=1.$$
	Motivated by \cite{Z5}, we fix some notations in the following paragraph.
	
	First, we define a finite maximal antichain on $I^*$ as follows:
	$$\Gamma_{n}^{r}=\{w\in I^*: p_w c_{w}^{r}< \xi_{r}^{n}\leq p_{w^-} c_{w^-}^{r} \}$$
	where $n\geq 1$ and $\xi_{r}=\min\{\min\limits_{1\leq j\leq M}t_jr_{j}^{r}, \min\limits_{1\leq j\leq N}p_jc_{j}^{r} \}.$ Let $|\Gamma_{n}^{r}|$ denote the
	cardinality of the set $\Gamma_{n}^{r}.$  We write $$\Phi_{n}^{r}= \bigcup_{m=0}^{l_{\Gamma_{n}^{r}}-1}I^m \cup \Delta_{\Gamma_{n}^{r}}^{*}$$
	for all $n\geq 1.$ By the definition of $\Phi_{n}^{r}$, one can easily see that for any $w\in \Phi_{n}^{r}$, we have $p_w c_{w}^{r}\geq \xi_{r}^{n}.$
	Thus, for any $w\in \Phi_{n}^{r},$ we define a finite maximal antichain on $J^*$ as follows
	$$\Gamma_{n}^{r}(w)=\{\sigma\in J^*: p_w c_{w}^{r} t_{\sigma}r_{\sigma}^{r}< \xi_{r}^{n}\leq p_{w} c_{w}^{r} t_{\sigma^{-}}r_{\sigma^{-}}^{r}\}.$$
	 For each $w\in \Phi_{n}^{r}$, let $|\Gamma_{n}^{r}(w)|$ be the cardinality of $\Gamma_{n}^{r}(w)$.  We define
	\begin{align}\label{eq3}
		\phi_{n,r}:= |\Gamma_{n}^{r}|+ \sum_{w\in \Phi_{n}^{r}} |\Gamma_{n}^{r}(w)|.
	\end{align}
	\begin{lemma} \label{lem3.3}
		For each $n\geq 1$, we have
		$$K=\bigg(\bigcup_{w\in \Gamma_{n}^{r}}S_{w}(K)\bigg)\cup \bigg(\bigcup_{w\in \Phi_{n}^{r}}\bigcup_{\sigma\in \Gamma_{n}^{r}(w)}S_{w}(g_\sigma(C))\bigg).$$
	\end{lemma}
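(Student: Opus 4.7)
The strategy is a two-stage unfolding of the two self-similarity identities $K = \bigcup_{i=1}^N S_i(K) \cup C$ and $C = \bigcup_{j=1}^M g_j(C)$, each carried out along a suitable finite maximal antichain. The first unfolding is precisely the content of Lemma~\ref{Z}, applied to $\Gamma = \Gamma_n^r$; the second is the analogous identity for $C$ along $\Gamma_n^r(w) \subseteq J^*$, inside each residual piece $S_w(C)$ produced by the first step.

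First I verify that $\Gamma_n^r$ is a finite maximal antichain in $I^*$: since each $p_i c_i^r < 1$, the weight $p_w c_w^r$ strictly decreases along any chain and eventually drops below $\xi_r^n$, so every infinite word $\omega \in \Omega$ has a unique prefix in $\Gamma_n^r$ (giving the antichain and maximality properties), and the bound $p_{w^-} c_{w^-}^r \geq \xi_r^n$ caps $|w|$ from above (finiteness). Lemma~\ref{Z} then yields
\[
K = \Big(\bigcup_{w \in \Gamma_n^r} S_w(K)\Big) \cup \Big(\bigcup_{w \in \Delta_{\Gamma_n^r}^*} S_w(C)\Big) \cup \Big(\bigcup_{m=0}^{l_{\Gamma_n^r}-1}\bigcup_{w \in I^m} S_w(C)\Big),
\]
and by the definition of $\Phi_n^r$ the last two unions collapse into the single union $\bigcup_{w \in \Phi_n^r} S_w(C)$.

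In the second stage I fix $w \in \Phi_n^r$ and use the observation (recorded just before the statement of the lemma) that $p_w c_w^r \geq \xi_r^n$. The same monotone-descent argument, now applied to $t_\sigma r_\sigma^r$, shows that $\Gamma_n^r(w)$ is a finite maximal antichain in $J^*$. A straightforward induction on $\max_{\sigma \in \Gamma_n^r(w)} |\sigma|$, starting from the trivial antichain $\{\emptyset\}$ and using the elementary refinement identity $g_\sigma(C) = \bigcup_{j=1}^M g_{\sigma j}(C)$ at each step, yields $C = \bigcup_{\sigma \in \Gamma_n^r(w)} g_\sigma(C)$. Applying $S_w$ and substituting back into the decomposition from the first stage produces the claimed formula.

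The only step requiring any genuine justification is the self-similar identity $C = \bigcup_{\sigma \in \Gamma_n^r(w)} g_\sigma(C)$ for an arbitrary finite maximal antichain of $J^*$; this is essentially the homogeneous ($C$-free) analogue of Lemma~\ref{Z} and is a standard fact about attractors of an IFS. Everything else is combinatorial bookkeeping already packaged into the definitions of $\Gamma_n^r$, $\Phi_n^r$, and $\Gamma_n^r(w)$, so no estimates beyond those definitions are needed.
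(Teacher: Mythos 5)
Your proof is correct and follows essentially the same route as the paper: apply Lemma~\ref{Z} with $\Gamma=\Gamma_n^r$, identify the two residual unions of $S_w(C)$-pieces with $\bigcup_{w\in\Phi_n^r}S_w(C)$, and then refine each copy of $C$ via the antichain identity $C=\bigcup_{\sigma\in\Gamma_n^r(w)}g_\sigma(C)$. The only difference is that you also verify the finite maximal antichain properties and sketch the induction for the refinement identity, details the paper leaves implicit.
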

	\begin{proof}
		By Lemma \ref{Z}, we have
		$$K=\bigg(\bigcup_{w\in \Gamma_{n}^{r}}S_{w}(K)\bigg)\cup \bigg(\bigcup_{w\in \Delta_{\Gamma_{n}^{r}}^{*}} S_{w}(C)\bigg)\cup \bigg(\bigcup_{m=0}^{l_{\Gamma_{n}^{r}}-1}\bigcup_{w\in I^m}S_{w}(C)\bigg).$$
		Since for each $w\in \Phi_{n}^{r} $, $\Gamma_{n}^{r}(w)$ is a finite maximal antichain in $J^*$, we obtian
		$$C=\bigcup_{\sigma\in \Gamma_{n}^{r}(w)}g_{\sigma}(C).$$
		Using the definition of $\Phi_{n}^{r}$ and the above relations, we get
		$$K=\bigg(\bigcup_{w\in \Gamma_{n}^{r}}S_{w}(K)\bigg)\cup \bigg(\bigcup_{w\in \Phi_{n}^{r}}\bigcup_{\sigma\in \Gamma_{n}^{r}(w)}S_{w}(g_\sigma(C))\bigg).$$
	\end{proof}
	\begin{lemma}\label{lem3.4}
		Assume that the condensation system $(\{S_i\}_{i=1}^{N},(p_i)_{i=0}^{N},\nu)$ satisfies the strong separation condition and the IFS $\mathcal{G}$ satisfies the strong open set condition. Let $\mu$ be the inhomogeneous measure associated with   $(\{S_i\}_{i=1}^{N},(p_i)_{i=0}^{N},\nu)$. Then, we have
		$$V_{\phi_{n,r},r}(\mu)\leq C^*\bigg(\sum_{w\in \Gamma_{n}^{r}} p_w c_{w}^{r} + \sum_{w\in \Phi_{n}^{r}}\sum_{\sigma\in \Gamma_{n}^{r}(w)} p_wc_{w}^{r} t_\sigma r_{\sigma}^{r}\bigg),$$ 	where $C^*=\max\{({\text{diam}(K)})^r, ({\text{diam}(C)})^r  \}$ and $\phi_{n,r}$ is given by equation \eqref{eq3}.
	\end{lemma}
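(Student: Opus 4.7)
The plan is to exhibit an explicit $\phi_{n,r}$-point set $A$ witnessing the upper bound and then estimate $\int d(x,A)^r\,d\mu(x)$ using a suitable iterated decomposition of $\mu$.

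\textbf{Step 1: Construct the quantizer.} By Lemma \ref{lem3.3}, $K$ is covered by the $|\Gamma_n^r|$ sets $S_w(K)$ for $w\in\Gamma_n^r$ together with the $\sum_{w\in\Phi_n^r}|\Gamma_n^r(w)|$ sets $S_w(g_\sigma(C))$ for $w\in\Phi_n^r$, $\sigma\in\Gamma_n^r(w)$. Picking one representative point from each non-empty set of this cover produces a set $A\subset K$ with $\mathrm{card}(A)\le \phi_{n,r}$, so $V_{\phi_{n,r},r}(\mu)\le \int d(x,A)^r\,d\mu(x)$.

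\textbf{Step 2: Iterated decomposition of $\mu$.} Iterating the condensation identity $\mu=p_0\nu+\sum_{i=1}^N p_i\,\mu\circ S_i^{-1}$ along the finite maximal antichain $\Gamma_n^r$ (expanding only those $\mu\circ S_w^{-1}$ factors with $w$ a proper ancestor of some element of $\Gamma_n^r$) gives
\begin{equation*}
\mu=\sum_{w\in\Gamma_n^r}p_w\,\mu\circ S_w^{-1}+p_0\sum_{w\in\Phi_n^r}p_w\,\nu\circ S_w^{-1}.
\end{equation*}
This is the analogue of the identity displayed just after Lemma 3.3 in the $\Gamma=I^n$ case; one proves it by induction on the depth of the antichain. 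Next, for each $w\in\Phi_n^r$ the set $\Gamma_n^r(w)$ is a finite maximal antichain on $J^*$, so iterating $\nu=\sum_{j=1}^M t_j\,\nu\circ g_j^{-1}$ along $\Gamma_n^r(w)$ yields $\nu=\sum_{\sigma\in\Gamma_n^r(w)}t_\sigma\,\nu\circ g_\sigma^{-1}$. Substituting,
\begin{equation*}
\mu=\sum_{w\in\Gamma_n^r}p_w\,\mu\circ S_w^{-1}+p_0\sum_{w\in\Phi_n^r}\sum_{\sigma\in\Gamma_n^r(w)}p_w t_\sigma\,\nu\circ(S_w\circ g_\sigma)^{-1}.
\end{equation*}

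\textbf{Step 3: Distance estimates on each piece.} The measure $\mu\circ S_w^{-1}$ is supported on $S_w(K)$, which contains a point of $A$, so
\begin{equation*}
\int d(x,A)^r\,d(\mu\circ S_w^{-1})(x)\le\bigl(\mathrm{diam}\,S_w(K)\bigr)^r\le c_w^r\,(\mathrm{diam}\,K)^r\le c_w^r\,C^*,
\end{equation*}
using the Lipschitz bound $d(S_w(x),S_w(y))\le c_w\,d(x,y)$. Likewise, $\nu\circ(S_w\circ g_\sigma)^{-1}$ is supported on $S_w(g_\sigma(C))$, which contains a point of $A$, giving
\begin{equation*}
\int d(x,A)^r\,d(\nu\circ(S_w\circ g_\sigma)^{-1})(x)\le c_w^r r_\sigma^r\,(\mathrm{diam}\,C)^r\le c_w^r r_\sigma^r\,C^*.
\end{equation*}
Summing, using $p_0\le 1$, and combining with Step 2 yields the required bound.

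\textbf{Main obstacle.} The only non-routine ingredient is the iterated decomposition of $\mu$ along the (non-uniform length) antichain $\Phi_n^r\cup\Gamma_n^r$; once this is in hand the distance estimates reduce to the bi-Lipschitz contraction ratios. If presenting a clean proof, I would either quote this iterated identity from the condensation literature (it follows directly from the corresponding decomposition of $K$ in Lemma \ref{Z}) or verify it by induction on $M_{\Gamma_n^r}-l_{\Gamma_n^r}$, expanding one layer of the condensation equation at a time.
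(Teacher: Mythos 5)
Your proof is correct, but it takes a genuinely different route from the paper's. The paper keeps the decomposition at the level of sets only: it uses the cover of $K$ from Lemma \ref{lem3.3}, bounds each local integral by $(\mathrm{diam})^r$ times the $\mu$-measure of the piece, and then computes $\mu(S_w(K))=p_w$ and $\mu(S_w(g_\sigma(C)))=p_wp_0t_\sigma$ \emph{exactly} -- this is precisely where the strong separation condition (to kill the terms $\nu(S_{w_\tau}(K)\cap C)$ and $\mu\circ S_i^{-1}(g_\sigma(C))$) and the strong open set condition (to get $\nu(g_\sigma(C))=t_\sigma$) enter. You instead decompose the \emph{measure} along the antichain, $\mu=\sum_{w\in\Gamma_n^r}p_w\,\mu\circ S_w^{-1}+p_0\sum_{w\in\Phi_n^r}\sum_{\sigma\in\Gamma_n^r(w)}p_wt_\sigma\,\nu\circ(S_w\circ g_\sigma)^{-1}$, and integrate term by term against probability measures of known small support. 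Your iterated identity is valid: $\Phi_n^r=\bigcup_{m<l_{\Gamma_n^r}}I^m\cup\Delta_{\Gamma_n^r}^*$ is exactly the set of proper prefixes of elements of $\Gamma_n^r$ (every word shorter than $l_{\Gamma_n^r}$ is a proper prefix of some element by maximality, and $\Delta_{\Gamma_n^r}^*$ collects the remaining ones), so expanding the condensation equation at precisely those nodes gives your display, and the inner expansion of $\nu$ along the maximal antichain $\Gamma_n^r(w)$ in $J^*$ is standard. What your route buys is that the separation hypotheses become superfluous for this lemma (the inequality $p_0\le 1$ absorbs everything, and no exact cylinder measures are needed); what the paper's route buys is that it only requires the already-established set decomposition of Lemma \ref{lem3.3} rather than a new measure-level antichain identity, at the cost of invoking SSC and SOSC. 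The one point you should not leave as a gesture is the induction establishing the antichain decomposition of $\mu$; it is routine (remove a deepest sibling family from the antichain and expand one level), but it is the load-bearing step of your argument and is not stated elsewhere in the paper.
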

	\begin{proof}
		First, we define a set $A$ such that $\text{card}(A)\leq \phi_{n,r}.$ For each $w\in \Gamma_{n}^{r},$ we choose an element $a_{w}\in S_{w}(K)$ and for each $w\in \Phi_{n}^{r}$, we choose an element $a_{w,\sigma}\in S_{w}(g_\sigma(C))$ such that $\sigma \in \Gamma_{n}^{r}(w).$ We collect all such type of elements and form a set $A$. Clearly,  $\text{card}(A)\leq \phi_{n,r}.$ Thus, by Lemma \ref{lem3.3}, we have
		\begin{align*}
			V_{\phi_{n,r},r}(\mu)&\leq \int_{K} d(x,A)^r d\mu(x) \\
			&\leq \sum_{w\in \Gamma_{n}^{r}}\int_{S_{w}(K)} d(x,A)^r d\mu(x)+ \sum_{w\in \Phi_{n}^{r}}\sum_{\sigma\in \Gamma_{n}^{r}(w)}\int_{S_{w}(g_\sigma(C))} d(x,A)^r d\mu(x)\\
			&\leq \sum_{w\in \Gamma_{n}^{r}}\int_{S_{w}(K)} d(x,a_{w})^r d\mu(x)+ \sum_{w\in \Phi_{n}^{r}}\sum_{\sigma\in \Gamma_{n}^{r}(w)}\int_{S_{w}(g_\sigma(C))} d(x,a_{w,\sigma})^r d\mu(x)\\
			&\leq \sum_{w\in \Gamma_{n}^{r}} (\text{diam}(S_{w}(K)))^r \mu(S_{w}(K))+ \sum_{w\in \Phi_{n}^{r}}\sum_{\sigma\in \Gamma_{n}^{r}(w)}(\text{diam}(S_{w}(g_\sigma(C))))^r  \mu(S_{w}(g_\sigma(C)))\\
			&\leq  \sum_{w\in \Gamma_{n}^{r}} c_{w}^{r}({\text{diam}(K)})^r \mu(S_{w}(K))+ \sum_{w\in \Phi_{n}^{r}}\sum_{\sigma\in \Gamma_{n}^{r}(w)} c_{w}^{r} r_{\sigma}^{r}({\text{diam}(C)})^r \mu(S_{w}(g_\sigma(C))).
		\end{align*}
		Writing $C^*=\max\{({\text{diam}(K)})^r, ({\text{diam}(C)})^r  \}$, we get
		\begin{equation}\label{eq4}
			V_{\phi_{n,r},r}(\mu)\leq C^*\bigg(\sum_{w\in \Gamma_{n}^{r}} c_{w}^{r} \mu(S_{w}(K))+ \sum_{w\in \Phi_{n}^{r}}\sum_{\sigma\in \Gamma_{n}^{r}(w)} c_{w}^{r} r_{\sigma}^{r} \mu(S_{w}(g_\sigma(C)))\bigg).
		\end{equation}
		Now, we determine the values of $\mu(S_{w}(K))$ and
		$\mu(S_{w}(g_\sigma(C)))$ for $w\in \Gamma_{n}^{r}$ and $\sigma\in \Gamma_{n}^{r}(w)$. Let $w\in \Gamma_{n}^{r}$ and $|w|=n_1$. Since
		$\mu=\sum_{\tau\in I^{n_1}}p_\tau\mu\circ S_{\tau}^{-1}+p_0\sum_{m=0}^{n_{1}-1}\sum_{\tau\in I^m}p_\tau \nu \circ S_{\tau}^{-1}$, and the condensation system satisfies the strong separation condition, we have
		\begin{align*}
			\mu(S_{w}(K))&=\sum_{\tau\in I^{n_1}}p_\tau\mu\circ S_{\tau}^{-1}(S_{w}(K))+p_0\sum_{m=0}^{n_{1}-1}\sum_{\tau \in I^m}p_\tau \nu \circ S_{\tau}^{-1}(S_{w}(K))\\&= p_w\mu( S_{w}^{-1}(S_{w}(K)))+	p_0\sum_{m=0}^{n_{1}-1}\sum_{\tau\in I^m}p_\tau \nu \circ S_{\tau}^{-1}(S_{w}(K))\\&= p_w\mu(K)+ p_0\sum_{m=0}^{n_{1}-1}\sum_{\substack{\tau\in I^m\\ \tau\prec w}}p_\tau \nu \circ S_{\tau}^{-1}(S_{w}(K)).
		\end{align*}
		Since $\tau\in I^m$ and $\tau\prec w$, we write $w=\tau w_\tau$ for some $w_\tau\in I^*$. Then, using $\mu(K)=1$, we have
		\begin{align*}
			\mu(S_{w}(K))&= p_w+ p_0\sum_{m=0}^{n_{1}-1}\sum_{\substack{\tau\in I^m\\ \tau\ \prec w}}p_\tau \nu (S_{\tau}^{-1}(S_{\tau w_\tau}(K)))\\&=p_w+ p_0\sum_{m=0}^{n_{1}-1}\sum_{\substack{\tau\in I^m\\ \tau \prec w}}p_\tau \nu (S_{w_\tau}(K)\cap C).
		\end{align*}
		Since the condensation system satisfies the strong separation condition, we have $S_{w_\tau}(K)\cap C=\emptyset$. Thus, we get
		\begin{equation}\label{eq5}
			\mu(S_{w}(K))= p_w.
		\end{equation}
		Since the IFS $J$ satisfies the strong open set condition, we have $\nu(g_\sigma(C))=t_\sigma$ for all $\sigma\in J^*$ (see \cite{BBHV, M}).
		\begin{align*}
			\mu(S_{w}(g_\sigma(C)))&=\sum_{\tau\in I^{n_1}}p_\tau\mu\circ S_{\tau}^{-1}(S_{w}(g_\sigma(C)))+p_0\sum_{m=0}^{n_{1}-1}\sum_{\tau\in I^m}p_\tau \nu \circ S_{\tau}^{-1}(S_{w}(g_\sigma(C)))\\&= p_w\mu( S_{w}^{-1}(S_{w}(g_\sigma(C))))+	p_0\sum_{m=0}^{n_{1}-1}\sum_{\substack{\tau\in I^m\\ \tau\prec w}}p_\tau \nu \circ S_{\tau}^{-1}(S_{w}(g_\sigma(C)))\\&= p_w\mu(g_\sigma(C))+0\\&=p_w \bigg(p_0\nu(g_\sigma(C))+ \sum_{i=1}^{N}p_i\mu\circ S_{i}^{-1}(g_\sigma(C))\bigg).
		\end{align*}
		Since the condensation system satisfies the strong separation condition, we obtain
		\begin{equation}\label{eq6}
			\mu(S_{w}(g_\sigma(C)))=p_w p_0\nu(g_\sigma(C))=p_w p_0 t_{\sigma}\leq p_{w} t_{\sigma}.
		\end{equation}
		By using \eqref{eq5} and \eqref{eq6} in the inequality \eqref{eq4}, we get the required result.
	\end{proof}
	\begin{prop}\label{Prop3.5}
		Assume that the condensation system $(\{S_i\}_{i=1}^{N},(p_i)_{i=0}^{N},\nu)$ satisfies the strong separation condition and the IFS $\mathcal{G}$ satisfies the strong open set condition. Let $\mu$ be the inhomogeneous measure associated with   $(\{S_i\}_{i=1}^{N},(p_i)_{i=0}^{N},\nu)$. Then, $$\limsup_{n\to \infty}n V_{n,r}^{\frac{t}{r}}(\mu)<\infty,$$
		for all $t>\max\{l_r,d_r\},$ where $l_r$ and $d_r$ are given by $\sum_{i=1}^{N}(p_ic_{i}^{r})^{\frac{l_r}{r+l_r}}=1$ and $\sum_{i=1}^{M}(t_ir_{i}^{r})^{\frac{d_r}{r+d_r}}=1$, respectively. 
	\end{prop}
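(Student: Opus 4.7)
The plan is to combine the upper bound on $V_{\phi_{n,r},r}(\mu)$ from Lemma~\ref{lem3.4} with an upper bound on the integer $\phi_{n,r}$ itself, then pass from the subsequence $\{\phi_{n,r}\}_n$ to arbitrary $n$ via monotonicity of the quantization error. Fix $t>\max\{l_r,d_r\}$ and set $s:=t/(r+t)$, so that $s>l_r/(r+l_r)$, $s>d_r/(r+d_r)$, and the algebraic identity $(1-s)\,t/r=s$ holds; this last identity is what makes the whole argument close without any control on consecutive ratios.

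Before estimating anything, I would record two preliminary facts. Because each $p_ic_i^r$ and each $t_jr_j^r$ lies in $(0,1)$ and $s$ strictly exceeds the corresponding dimensional exponent, one has $\alpha_1:=\sum_{i=1}^N(p_ic_i^r)^s<1$ and $\alpha_2:=\sum_{j=1}^M(t_jr_j^r)^s<1$. Moreover, since $\{(p_ic_i^r)^{l_r/(r+l_r)}\}_{i=1}^N$ is a probability vector on $I$, the standard product-measure argument gives $\sum_{w\in\Gamma}(p_wc_w^r)^{l_r/(r+l_r)}=1$ for every finite maximal antichain $\Gamma\subseteq I^*$, whence $\sum_{w\in\Gamma}(p_wc_w^r)^s\leq 1$ since $p_wc_w^r\in(0,1)$. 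The analogous bound holds on $J^*$ with $d_r$ in place of $l_r$.

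Starting from Lemma~\ref{lem3.4}, I would use the defining inequalities $p_wc_w^r<\xi_r^n$ on $\Gamma_n^r$ and $p_wc_w^r\,t_\sigma r_\sigma^r<\xi_r^n$ on $\Gamma_n^r(w)$ to factor $\xi_r^{n(1-s)}$ out of each summand, leaving $(p_wc_w^r)^s$ or $(p_wc_w^r)^s(t_\sigma r_\sigma^r)^s$ inside. The inner sum over $\sigma\in\Gamma_n^r(w)$ is then at most $1$ by the antichain estimate applied to $J^*$, and the outer sum $\sum_{w\in\Phi_n^r}(p_wc_w^r)^s$ is controlled by partitioning $\Phi_n^r\cap I^m\subseteq I^m$ and collapsing a geometric series to $1/(1-\alpha_1)$. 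This yields $V_{\phi_{n,r},r}(\mu)\leq C_1\xi_r^{n(1-s)}$ for a constant $C_1$ independent of $n$. Symmetrically, the lower bounds $p_wc_w^r\geq\xi_r^{n+1}$ on $\Gamma_n^r$ and $t_\sigma r_\sigma^r\geq\xi_r^{n+1}/(p_wc_w^r)$ on $\Gamma_n^r(w)$, combined with the antichain identities, deliver $|\Gamma_n^r|\leq\xi_r^{-(n+1)s}$ and $|\Gamma_n^r(w)|\leq(p_wc_w^r)^s\xi_r^{-(n+1)s}$; summing these gives $\phi_{n,r}\leq C_2\xi_r^{-(n+1)s}$.

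To finish, I note that $\phi_{n,r}\to\infty$, so for every sufficiently large integer $k$ there exists $n$ with $\phi_{n,r}\leq k<\phi_{n+1,r}$; monotonicity of $V_{\cdot,r}(\mu)$ then gives
$$kV_{k,r}(\mu)^{t/r}\leq \phi_{n+1,r}\,V_{\phi_{n,r},r}(\mu)^{t/r}\leq C_2 C_1^{t/r}\,\xi_r^{-(n+2)s+n(1-s)t/r},$$
and the identity $(1-s)t/r=s$ collapses the exponent of $\xi_r$ to the constant $-2s$, giving a uniform bound. The principal obstacle is the middle step: uniformly controlling $\sum_{w\in\Phi_n^r}(p_wc_w^r)^s$ even though $\Phi_n^r$ contains words of arbitrarily large length. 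This is exactly where the hypothesis $t>l_r$ is used, since it is $\alpha_1<1$ that makes the geometric series over layers converge. The closing cancellation $(1-s)t/r=s$ is what lets us bypass any bound on $\phi_{n+1,r}/\phi_{n,r}$ and conclude directly.
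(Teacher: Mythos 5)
Your proposal is correct, and its core is the same as the paper's: both start from Lemma~\ref{lem3.4}, use the antichain identities $\sum_{w\in \Gamma}(p_wc_w^r)^{l_r/(r+l_r)}=1$ (and the analogue on $J^*$) together with the two-sided bounds $\xi_r^{n+1}\leq p_wc_w^r<\xi_r^n$ built into the definitions of $\Gamma_n^r$ and $\Gamma_n^r(w)$, and control $\sum_{w\in\Phi_n^r}(p_wc_w^r)^{t/(r+t)}$ by the geometric series $\sum_m\bigl(\sum_i(p_ic_i^r)^{t/(r+t)}\bigr)^m$, which is exactly where $t>l_r$ enters. The genuine difference is the endgame. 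The paper bounds $V_{\phi_{n,r},r}(\mu)\leq C^*\xi_r^n\phi_{n,r}$, bounds $\phi_{n,r}$, concludes $\limsup_n\phi_{n,r}V_{\phi_{n,r},r}^{t/r}(\mu)<\infty$, and then invokes Zhu's interpolation result \cite[Theorem 2.4]{Z4} as a black box to pass to the full sequence. You instead keep the two bounds $V_{\phi_{n,r},r}(\mu)\leq C_1\xi_r^{n(1-s)}$ and $\phi_{n+1,r}\leq C_2\xi_r^{-(n+2)s}$ separate and interpolate by hand, sandwiching $\phi_{n,r}\leq k<\phi_{n+1,r}$ and letting the identity $(1-s)t/r=s$ cancel the exponents. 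This buys you a self-contained proof that does not need any hypothesis on the ratios $\phi_{n+1,r}/\phi_{n,r}$ (a hypothesis that Zhu-type interpolation lemmas typically carry and that the paper does not explicitly verify), at the cost of a slightly longer computation; the paper's route is shorter on the page but outsources that step. All the individual estimates you sketch check out, including the observation that $s>d_r/(r+d_r)$ forces $\sum_{\sigma\in\Gamma_n^r(w)}(t_\sigma r_\sigma^r)^s\leq 1$ because each factor lies in $(0,1)$.
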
	
	\begin{proof}
		By Lemma \ref{lem3.4} and using the definition of $\Gamma_{n}^{r}$ and $\Gamma_{n}^{r}(w)$, we have
		\begin{align*}
			V_{\phi_{n,r},r}(\mu)&\leq C^*\bigg(\sum_{w\in \Gamma_{n}^{r}} p_w c_{w}^{r} + \sum_{w\in \Phi_{n}^{r}}\sum_{\sigma\in \Gamma_{n}^{r}(w)} p_wc_{w}^{r} t_\sigma r_{\sigma}^{r}\bigg)\\&\leq C^* \xi_{r}^{n} \phi_{n,r}.
		\end{align*}
		Now, we determine the bound of $\phi_{n,r}$. Suppose the values of $l_r$ and $d_r$ are uniquely determined by
		$\sum_{i=1}^{N}(p_ic_{i}^{r})^{\frac{l_r}{r+l_r}}=1$ and $\sum_{i=1}^{M}(t_ir_{i}^{r})^{\frac{d_r}{r+d_r}}=1$, respectively. Since $\Gamma_{n}^{r}$ is a finite maximal antichain in $I^*$  and for each $w\in \Phi_{n}^{r} $, $\Gamma_{n}^{r}(w)$ is a  finite maximal antichain in $J^*$, we have
		$$\sum_{w\in \Gamma_{n}^{r}}(p_w c_{w}^{r})^{\frac{l_r}{r+l_r}}=1 \quad \text{and} \quad  \sum_{\sigma\in \Gamma_{n}^{r}(w)}(t_{\sigma}r_{\sigma}^{r})^{\frac{d_r}{r+d_r}}=1.$$
		Let $t>\max\{l_r,d_r\}.$ Then, we have
		$$\sum_{w\in \Gamma_{n}^{r}}(p_w c_{w}^{r})^{\frac{t}{r+t}}<1 \quad \text{and} \quad  \sum_{\sigma\in \Gamma_{n}^{r}(w)}(t_{\sigma}r_{\sigma}^{r})^{\frac{t}{r+t}}<1.$$
		By the definitions of $\phi_{n,r}$, $\Gamma_{n}^{r}$
		and $\Gamma_{n}^{r}(w)$ for each $w\in \Phi_{n}^{r}$, we obtain
		\begin{align*}
			\phi_{n,r}\xi_{r}^{\frac{(n+1)t}{r+t}}&\leq \sum_{w\in \Gamma_{n}^{r}} (p_w c_{w}^{r})^{\frac{t}{r+t}} + \sum_{w\in \Phi_{n}^{r}}\sum_{\sigma\in \Gamma_{n}^{r}(w)} (p_wc_{w}^{r} t_\sigma r_{\sigma}^{r})^{\frac{t}{r+t}}\\& < 1+ \sum_{w\in \Phi_{n}^{r}}(p_wc_{w}^{r})^{\frac{t}{r+t}}\sum_{\sigma\in \Gamma_{n}^{r}(w)}  (t_\sigma r_{\sigma}^{r})^{\frac{t}{r+t}}\\&< 1+ \sum_{w\in \Phi_{n}^{r}}(p_wc_{w}^{r})^{\frac{t}{r+t}}.
		\end{align*}
		Since $\Phi_{n}^{r}= \bigcup_{m=0}^{l_{\Gamma_{n}^{r}}-1}I^m \cup \Delta_{\Gamma_{n}^{r}}^{*}$
		and $\Delta_{\Gamma_{n}^{r}}^{*}\subset \bigcup_{m=l_{\Gamma_{n}^{r}}}^{M_{\Gamma_{n}^{r}}} I^m$, we have
		\begin{align*}
			\phi_{n,r}\xi_{r}^{\frac{(n+1)t}{r+t}}&< 1+\sum_{m=0}^{l_{\Gamma_{n}^{r}}-1} \sum_{w\in I^m}(p_wc_{w}^{r})^{\frac{t}{r+t}}+ \sum_{m=l_{\Gamma_{n}^{r}}}^{M_{\Gamma_{n}^{r}}} \sum_{w\in I^m}(p_wc_{w}^{r})^{\frac{t}{r+t}}\\&= 1+\sum_{m=0}^{M_{\Gamma_{n}^{r}}} \sum_{w\in I^m}(p_wc_{w}^{r})^{\frac{t}{r+t}}\\&= 1+\sum_{m=0}^{M_{\Gamma_{n}^{r}}} \bigg(\sum_{i=1}^{N}(p_ic_{i}^{r})^{\frac{t}{r+t}}\bigg)^m\\&< \frac{2}{1-\sum_{i=1}^{N}(p_ic_{i}^{r})^{\frac{t}{r+t}}}.
		\end{align*}
		Thus, by the previous inequalities, we obtain
		\begin{align*}
			\phi_{n,r} V_{\phi_{n,r},r}^{\frac{t}{r}}(\mu)&\leq  {(C^*)}^{\frac{t}{r}} \xi_{r}^{\frac{nt}{r}} \phi_{n,r}^{\frac{t+r}{r}}\\&\leq {(C^*)}^{\frac{t}{r}} \xi_{r}^{\frac{-t}{r}}\bigg(\frac{2}{1-\sum_{i=1}^{N}(p_ic_{i}^{r})^{\frac{t}{r+t}}}\bigg)^{\frac{t+r}{r}}.
		\end{align*}
		This implies that $\limsup_{n\to \infty}\phi_{n,r} V_{\phi_{n,r},r}^{\frac{t}{r}}<\infty$ for all $t> \max\{l_r,d_r\}$. By using \cite[Theorem 2.4]{Z4}, we obtain $$\limsup_{n\to \infty}n V_{n,r}^{\frac{t}{r}}(\mu)<\infty \quad \forall \quad  t>\max\{l_r,d_r\}.$$
		This completes the proof.
		
	\end{proof}	
	\subsection*{Proof of Theorem~\ref{main}} Let $\mu$ be the condensation measure associated with the condensation system
	$(\{S_i\}_{i=1}^{N}, (p_i)_{i=0}^{N},\nu)$. Assume that the condensation system $(\{S_i\}_{i=1}^{N},(p_i)_{i=0}^{N},\nu)$ satisfies the strong separation condition. Then, by Lemma \ref{lem3.1}, Proposition \ref{Prop3.2} and $(2)$ of Proposition \ref{Prop2.2}, we obtain
	$$\max\{k_r, \underline{D}_r(\nu)\}\leq \underline{D}_r(\mu),$$
	where $k_r$ is uniquely determined by $\sum_{i=1}^{N}(p_is_{i}^{r})^{\frac{k_r}{r+k_r}}=1.$\\ Furthermore, if the measure $\nu$ is an invariant measure corresponding to the IFS $\mathcal{G}$ with probability vector $(t_1,t_2,\ldots,t_M)$ and  the IFS $\mathcal{G}$ satisfies the strong open set condition, then by  Proposition \ref{Prop3.5} and $(1)$ of Proposition \ref{Prop2.2}, we get
	$$\overline{D}_r(\mu)\leq \max\{l_r,d_r\},$$
	where $l_r$ and $d_r$ are given by $\sum_{i=1}^{N}(p_ic_{i}^{r})^{\frac{l_r}{r+l_r}}=1$ and $\sum_{i=1}^{M}(t_ir_{i}^{r})^{\frac{d_r}{r+d_r}}=1$, respectively.
	
	\qed
	
	\begin{remark}
		Let $\mu$ be the condensation measure for a condensation system
		$(\{S_i\}_{i=1}^{N}, (p_i)_{i=0}^{N},\nu)$, where $\nu$ is any Borel probability measure supported on any compact set $C$. Then, by Lemma \ref{lem3.1} and  Proposition \ref{Prop3.2}, the lower bound obtained in Theorem \ref{main} is still valid in this more general setting. The assumption that the measure $\nu$ is the invariant measure corresponding to some IFS is needed only for the upper bound of the quantization dimension.
	\end{remark}	

\noindent {\bf Statements and Declarations:}
\begin{itemize}
	\item {\bf Competing Interests:} The authors have no relevant financial or non-financial interests
	to disclose.
	\item {\bf Data availability:} Data sharing is not applicable to this article as no datasets were generated or analysed during the current study.
	\item {\bf Funding:} The authors declare that no funds, grants, or other support were received
	during the preparation of this manuscript.
	\item 	{\bf Author Contributions:} All authors contributed equally in this manuscript.
\end{itemize}

\noindent {\bf Acknowledgements:} We would like to thank the anonymous referees for their valuable comments and
suggestions.

\end{document}